\theoremstyle{plain}
\newtheorem{definition}{Definition}[section]
\newtheorem{theorem}[definition]{Theorem}
\newtheorem{lemma}[definition]{Lemma}
\newtheorem{assumption}[definition]{Assumption}
\newtheorem{proposition}[definition]{Proposition}
\newcommand{\uncset}{\mathcal{U}}
\newcommand{\nominalxi}{\overline{\uncelem}}
\newcommand{\N}{\mathbb{N}}
\newcommand{\R}{\mathbb{R}}
\newcommand{\uncelem}{u}
\newcommand{\extelem}{\bar{\uncelem}}
\newcommand{\coverSpace}{\mathcal{W}} 
\newcommand{\objfunc}{f}
\newcommand{\uncobjfunc}{f_u}
\newcommand{\consfunc}{g}
\newcommand{\nameuncoptprob}{\mathcal{P}_u}
\newcommand{\closure}[1]{\overline{#1}}
\newcommand{\inverseRobustProblem}{(\mathcal{P}_\text{IROP})}
\newcommand{\GSIPApproximation}{(\mathcal{P}_{\text{GSIP}})}
\newcommand{\reducedExampleProblem}{(\mathcal{P}_{\text{red}})}
\title{A unified approach to inverse robust optimization problems}
\date{}
\author{H. Berthold, T. Heller, T. Seidel\thanks{Corresponding author. Email: tobias.seidel@itwm.fraunhofer.de}~}
\affil{Fraunhofer Institute for Industrial Mathematics ITWM\\  67663 Kaiserslautern\\  Germany}
\begin{document}

\maketitle

\begin{abstract}
A variety of approaches has been developed to deal with uncertain optimization problems. Often, they start with a given set of uncertainties and then try to minimize the influence of these uncertainties. Depending on the approach used, the corresponding price of robustness is different. The reverse view is to first set a budget for the price one is willing to pay and then find the most robust solution.

In this article, we aim to unify these inverse approaches to robustness. We provide a general problem definition and a proof of the existence of its solution. We study properties of this solution such as closedness, convexity, and boundedness. We also provide a comparison with existing robustness concepts such as the stability radius, the resilience radius, and the robust feasibility radius. We show that the general definition unifies these approaches. We conclude with examples that demonstrate the flexibility of the introduced concept.
\end{abstract}

Keywords: Robust Optimization, Uncertainty Sets, Non-Linear Optimization, Price of Robustness, GSIP

\section{Introduction}
In many real-world problems, one does not know exactly the input data of a formulated optimization problem. This may be due to the fact that we are dealing with forecasts, predictions, or simply unavailable information. To deal with this, it is essential to treat the given data as uncertain. In principle, there are two different ways to deal with uncertainty. Either one knows some distribution of the uncertainty, or not. In the first case, this information can be used for the mathematical optimization problem, while in the second case, no additional information is given. Both approaches are widely used in many real-world applications, such as energy management, finance, scheduling, and supply chain. For a detailed overview of possible applications of robust optimization, we refer to \cite{bertsimas2011theory}. In this article, we focus mainly on problems without information about the distribution of uncertainty.

Fixing the uncertainty to solve the corresponding optimization problem may yield a solution that is infeasible for other scenarios of the uncertainty set. Therefore, one tries to find solutions that are feasible for all possible scenarios of the uncertainty set. The problem of finding an optimal solution, i.e. the solution with the best objective function value, among these feasible solutions is called the robust counterpart (cf. \cite{ben2009robust}). 

There are many surveys on robust optimization, such as Ben-Tal et al. \cite{ben2009robust} or Bertsimas et al. \cite{bertsimas2011theory}. For tractability reasons, the focus is often limited to robust linear or robust conic optimization. Robust optimization in the context of semi-infinite optimization can be found e.g. in \cite{goberna2013robust}, while \cite{lopez2007semi, vazquez2008generalized, Stein2012, stein2003solving} consider general solution methods. For applications and results on robust nonlinear optimization, we refer to a survey by Leyffer et al. \cite{leyffer2020survey}.

The question of how to construct an appropriate uncertainty set is often not addressed, and the uncertainty set is assumed to be given. A closely related question is which subset of the uncertainty set is covered by a given solution. Considering a larger uncertainty set may lead to overly conservative solutions, since more and more scenarios have to be considered. This trade-off between the probability of violation and the effect on the objective function value of the nominal problem is called the \emph{price of robustness} and was introduced by Bertsimas and Sim \cite{bertsimas2004price}. Many robust concepts that have been formulated and analyzed in recent years try to deal with the price of robustness in order to avoid or reduce it. 

Bertsimas and Sim \cite{bertsimas2003robust,bertsimas2004price} defined the \emph{Gamma robustness} approach, where the uncertainty set is reduced by cutting out less likely scenarios. The concept of \emph{light robustness} was first defined by Fischetti and Monaci \cite{fischetti2009light} and later generalized by Schöbel \cite{schobel2014generalized}. Given a tolerable loss for the optimal value of the nominal solution, one tries to minimize the \emph{grade of infeasibility} over all scenarios of the uncertainty set. 

Another approach to deal with overly conservative solutions is to allow a second stage decision. Ben-Tal et al. \cite{ben2004adjustable} introduced the idea of \emph{adjustable robustness}, where the set of variables is divided into here-and-now variables and wait-and-see variables. While the former need to be chosen before the uncertainty is revealed, the latter need to be chosen only after the realization is known.

In this article, we pursue a different approach to dealing with the price of robustness, which we call \emph{inverse robustness}. The main idea is to reverse the perspective of the approaches described above. Instead of finding a solution that minimizes (or maximizes) the objective function under a given set of uncertainties, we want to find a solution that maximizes the considered set of uncertainties under a given objective function. In this way, we are not dependent on the a priori choice of the uncertainty set and then accepting the loss of objective value. Instead, we can set the price we are willing to pay and then find the most robust solution with this given budget. Furthermore, the study of the above approaches is often limited to the robust linear case. We want to define inverse robustness in a more general way and study the concept also for nonlinear problems.

Especially for the linear case, concepts have been introduced to measure the robustness of a given solution. The \emph{stability radius} and \emph{resilience radius} of a solution can be seen as measures for a fixed solution of how much the uncertain data can deviate from a nominal value while still being an (almost) optimal solution. For a more detailed discussion of resilience we refer to \cite{Weiss2016}. Both concepts can be seen as properties of a given solution, and the shape of the uncertainty set must be specified in advance. A similar concept has been studied in the area of facility location problems. Labbé presented in \cite{labbe1991sensitivity} an approach to compute the sensitivity of a facility location problem. Several publications (\cite{carrizosa2003robust, carrizosa2015threshold,ciligot2014robustness,blanquero2011locating}) deal with the question of how to find a solution that is least sensitive, and thus deal with a concept quite similar to resilience. We will show that finding a point that maximizes the stability radius or the resilience radius, given a budget on the objective, can be seen as a special case of inverse robust optimization. However, the general definition of inverse robustness provides more flexibility. First, it allows to define measures that can include distributional information about the uncertainty. Second, the shape of the considered uncertainty is not restricted to given shapes, but can be more complex.

The outline of the article is as follows. In Section~\ref{sec: model} we define the inverse robust optimization problem (IROP) and discuss the properties of its solution. In Section~\ref{sec: cover space choice} we discuss different possible choices and description for the cover space that contains all potential uncertainty sets. Afterwards we compare our general definition with other inverse robustness concepts in Section~\ref{sec: concept comparison}. In Section~\ref{sec: examples} we provide and discuss examples. Finally, we conclude the article with a short outlook.

\section{The inverse robust optimization problem} \label{sec: model}
In this article, we consider parametric optimization problems given by
\begin{align} \label{prob: Ursprungsproblem}
	(\nameuncoptprob) \qquad \min_{x \in X \subseteq\R^n} \quad & \objfunc(x,u) \\
	\text{ s.t. } \quad & \consfunc(x,u) \leq 0, \nonumber
\end{align}
depending on an uncertain parameter~$u\in \mathbb{R}^m$. We assume that $f(\cdot,u),g(\cdot,u) : X \to \R$ are at least continuous functions w.r.t$.$ $x$ for some fixed parameter~$u$, which is also called \emph{scenario}, belonging to a \emph{uncertainty set}~$\uncset \subseteq \R^m$. The set $X \subseteq\R^n$ is given by further restrictions on $x$ that do not depend on $u$. For simplicity, we consider only one constraint that depends on the uncertain parameter $u$. However, the following results generalize to multiple constraints by considering their maximum. We assume that there is a special scenario $\bar{u}\in \uncset$ called \emph{nominal scenario}. This could be the average of the scenarios, or the most likely scenario. The nominal problem~$(\mathcal{P}_{\bar{u}})$ is defined as follows: 
\begin{align*}
	(\mathcal{P}_{\bar{u}}) \qquad f^* \coloneqq \min_{x \in X} \quad & f(x,\bar{u})\\
	\text{ s.t. } \quad &  g(x,\bar{u}) \leq 0.
\end{align*}

We call the objective function value of the optimization problem for the nominal scenario above the \emph{nominal objective value} and denote it as $f^*$. Throughout this article we assume that at least the nominal problem has a feasible solution and the nominal objective value $f^*$ is well-defined.

The idea of the \emph{inverse robust optimization problem (IROP)} is to allow a nonnegative deviation~$\epsilon \geq 0$ from the nominal objective value in order to cover the uncertainty set~$\uncset$ as much as possible. We refer to the deviation as the \emph{budget}. The task to cover $\uncset$ as much as possible needs a more precise interpretation. For this, we define a \emph{cover space}~$\coverSpace \subseteq 2^\uncset$ and a \emph{merit function}~$V:\coverSpace \to \R$ which maps every subset of $\uncset$ to a value in $\R$. With this, we obtain an instance of the IROP as follows:

\begin{align} \label{InvRobProblem}
	\inverseRobustProblem \qquad \max_{x \in X, W \in \coverSpace} \quad &  V(W) \\
	\text{ s.t. } \quad & \objfunc(x,u) \leq f^* + \epsilon \qquad \quad \forall u \in W \label{BudgetConstraint}\\
	& \consfunc(x,u) \leq 0 \qquad \qquad \quad  \forall u \in W \label{FeasibilityConstraint}\\
	& \nominalxi \in W \label{NominalScenarioConstraint}.
\end{align}

We call the constraint~\eqref{BudgetConstraint} the \emph{budget constraint} and the constraints~\eqref{FeasibilityConstraint} the \emph{feasibility constraint} of the IROP.

Please note that it is a non-trivial task to define a merit function~$V$ and a cover space $\coverSpace$, since the optimal solution and the tractability depend on it. A bad choice can even lead to an ill-posed problem due to Vitali's theorem (cf.\cite{oxtoby1971MC}). However, this should not be seen as a drawback. These two objects make the definition of a inverse robust optimization problem very general. The merit function can be simply the volume, but can also contain information about the distribution of the uncertain parameter $u$. The cover space can either consist of sets of a concrete shape, e.g. ellipses or boxes, or it can also be a generic set system like a $\sigma$-algebra.

In Section~\ref{sec: cover space choice} we will discuss some concrete choices of the cover space. In this section we are going to show some general statements about the existence and shape of solutions for $\inverseRobustProblem$. One property we want to emphasize here, is that existence of a feasible solution is relatively easy to guarantee. As long as $\{\bar{u}\}\in \coverSpace$, there is a feasible solution, as we assumed that the nominal problem is well-defined. Note that it can be hard to check this for an ordinary robust optimization problem. For the next statements we make some basic assumptions about the cover space $\coverSpace$ and the merit function. Given a compact subset $C\subseteq \uncset$, we denote the set of all compact subsets of $C$ by $\mathcal{K}(C)$.

\begin{assumption}\label{assumption: cover space W}
	We assume that the cover space $\coverSpace$ satisfies the following conditions:
	\begin{enumerate}
		\item For any $W \in \coverSpace$, we know that $\closure{W} \in \coverSpace$.
		\item $\mathcal{K}(C) \cap \coverSpace$ is complete w.r.t. the Hausdorff-metric $d_H$ for any compact subset $C\subseteq \uncset$.
		\item $\{\nominalxi\} \in \coverSpace$.
	\end{enumerate}
\end{assumption}
In the following we let $\tilde{\coverSpace}:= \mathcal{K}(\uncset)\cap \coverSpace$. Note that, if $\uncset$ is itself compact, it suffices to check the second condition in Assumption \ref{assumption: cover space W} for $C=\uncset$.
\begin{assumption}\label{assumption: objective function V}
	Given a cover space $\coverSpace \subseteq 2^\uncset$, we assume that the objective function $V: \coverSpace \to \R$ satisfies the following conditions:
	\begin{enumerate}
		\item $V: \tilde{\coverSpace} \to \R$ is upper semi-continuous w.r.t. the topology induced by the Hausdorff-metric and
		\item $V(W_1) \leq V(W_2)$ for all $W_1, W_2 \in \coverSpace$ with $W_1 \subseteq W_2$.
	\end{enumerate}
\end{assumption}

In the remainder of this section we study how the structure of the parametric problem $(\mathcal{P}_u)$ influences an optimal chosen set $W^* \in \coverSpace$. We start with a theorem that ensures the existence of a solution of $\inverseRobustProblem$.

\begin{theorem}\label{thm: compact set}
	Given a compact uncertainty set~$\uncset \subseteq \R^m$, two continuous functions~$f,g: X \times \uncset \to \R$ w.r.t. $(x,u) \in X \times \uncset$, a compact set~$X \subseteq \R^n$, a cover space $\coverSpace \subseteq 2^\uncset$ and a merit function $V$ which fulfill Assumption~\ref{assumption: cover space W} and Assumption~\ref{assumption: objective function V}. Then there exists a maximizer~$(x^*,W^*) \in X \times \coverSpace$ of $\inverseRobustProblem$, where $W^*$ is a compact set.
\end{theorem}
\begin{proof}
	First we show that if a solution exists, then the corresponding solution set~$W^*$ is a compact set. 
	Let $\closure{W}$ be the closure of a set $W \in \coverSpace$. Because of Assumption~\ref{assumption: cover space W}, we know that $\closure{W} \in \coverSpace$ holds. Due to the continuity of $f,g$ w.r.t. $u$ we can also conclude that for any feasible $(x,W) \in \mathcal{F}$, where
	
	\begin{align*}
		\mathcal{F} \coloneqq \{ (x,W) \in X \times \coverSpace \ : \ & f(x,u) \leq f^* +  \epsilon \ \forall u \in W,\\
		& g(x,u) \leq 0 \ \forall u \in W, \quad \bar{u} \in W \}
	\end{align*}
	holds, also $(x,\closure{W}) \in \mathcal{F}$ is feasible. Since we assumed that $V(W_1) \leq V(W_2)$ for any $W_1,W_2 \in \coverSpace$ with $W_1 \subseteq W_2$, we can reduce the search space of the original optimization problem to the space of closed elements of the cover space $\coverSpace$. As the uncertainty set $\uncset$ was assumed to be compact, we reduce the search space to the space of compact elements of the cover space which is by definition $\tilde{\coverSpace}$.
	
	In a second step, we show that the feasible set
	\begin{align*}
		\tilde{\mathcal{F}} \coloneqq \{ (x,W) \in X \times \tilde{\coverSpace} \ : \ & f(x,u) \leq f^* +  \epsilon \ \forall u \in W,\\
		& g(x,u) \leq 0 \ \forall u \in W, \quad \bar{u} \in W \}
	\end{align*}
	is compact and non-empty. As $\uncset$ is compact, $\mathcal{K}(\uncset)$ is also a compact set itself \footnote{In the set of subsets of $\R^q$ using the topology induced by the Hausdorff-metric, see \cite{Hausdorff1957}).}. Because we assumed that~$\tilde{\coverSpace}$ is complete w.r.t. the Hausdorff-metric $d_H$, we know that it is closed and therefore compact, too. Consequently, the set~$X \times \tilde{\coverSpace}$ is a compact set as the Cartesian product of two compact sets.\\
	
	Next we prove that $\tilde{\mathcal{F}}$ is a closed set. Therefore, we consider a convergent sequence $(x_n,W_n)_{n \in \mathbb{N}} \subseteq \tilde{\mathcal{F}}$ with limit $(x^*,W^*) \in X \times \tilde{\coverSpace}$. We have to show that $(x^*,W^*) \in \tilde{\mathcal{F}}$. We do this by showing that the constraints $\eqref{BudgetConstraint}-\eqref{NominalScenarioConstraint}$ are satisfied.
	
	\begin{itemize}
		\item As $(x_n,W_n) \in \tilde{\mathcal{F}}$, we know that $\nominalxi \in W_n$ holds for all $n \in \mathbb{N}$ and consequently $\nominalxi \in \bigcap_{n \in \mathbb{N}} W_n \subseteq W^*$ as $d_H(W_n,W^*) \to 0$, where $d_H$ is the Hausdorff-metric.
		\item Fix an arbitrary $u^* \in W^*$. As $\lim_{n \to \infty} d_H(W_n,W^*) = 0$, we can find a sequence $(u_n)_{n \in \N}$ with $u_n \in W_n$ for all $n \in \N$ and $u_n \to u^*$. By continuity of $g$ and feasibility of $(x_n, W_n)$ for all $n \in \N$ we get:
		\begin{equation*}
			g(x^*,u^*) = \lim_{n \to \infty} g(x_n,u_n) \leq \lim_{n \to \infty} \max_{u \in W_n} g(x_n,u) \leq 0.
		\end{equation*}
		As $u^* \in W^*$ was chosen arbitrarily this implies $\max_{u \in W^*} g(x^*,u) \leq 0$.
		\item We can argue the same way as for the feasibility constraint $\eqref{FeasibilityConstraint}$ to show: $\max_{u \in W^*} f(x^*,u) \leq f^* + \epsilon$.
	\end{itemize}
	
	This means that all constraints are satisfied and $(x^*,W^*) \in \tilde{\mathcal{F}}$. As the sequence $(x_n,W_n)_{n \in \mathbb{N}}$ was arbitrarily chosen, we showed that $\tilde{\mathcal{F}}$ is closed.\\
	In total we know that the feasible set $\tilde{\mathcal{F}}$ is compact as a closed subset of a compact set.
	
	Because $V$ was assumed to be upper semi-continuous w.r.t. $W$ on $\tilde{\coverSpace}$, we can ensure the existence of a maximizer of $\inverseRobustProblem$. Note that the feasible set $\mathcal{F}$ is non-empty as the choice~$(x^*,\{\bar{u}\})$ is feasible by definition of $f^*$ for all budgets $\epsilon \geq 0$.
\end{proof}

In the statement above we assumed that $\uncset$ is compact. We will now drop this assumption, but demand that the function $V$ is a finite measure on a $\sigma$-algebra.

\begin{theorem}\label{thm: Existence Measure}
	Assume that $\coverSpace$ is a $\sigma$-algebra on $\uncset$ and $V: \coverSpace \rightarrow \mathbb{R}$ is a finite measure. Let $X$ be a compact set, $f,g: X\times \uncset \rightarrow \mathbb{R}$ be continuous functions  and let Assumption \ref{assumption: cover space W} hold. Moreover, assume that there is a sequence of compact sets $C_k \in \coverSpace, k\in \N$, such that $C_{k}\subseteq C_{k+1}$ for $k\in\N$ and $\bigcup_{k\in \N} C_k = \uncset$. Then there exists a maximizer $(x^*,W^*) \in X \times \coverSpace$ of $\inverseRobustProblem$.
\end{theorem}
\begin{proof}
	As in the proof of Theorem~\ref{thm: compact set} we can restrict our consideration to closed sets in $\coverSpace$. Note that by assumption the feasible set of $\inverseRobustProblem$ is non-empty and we consider a finite measure, which fulfills Assumption \ref{assumption: objective function V} $(2)$ by definition and guarantees that the objective is bounded. Thus the supremum~$V^*$ exists and we can find a sequence of feasible elements~$(x_n,W_n)_{n \in \N}$ such that
	\begin{equation}
		\lim_{n\rightarrow\infty} V(W_n) = V^*. \label{eq:Convergence against sup}
	\end{equation}
	As $X$ is assumed to be compact, we can find a subsequence which converges towards an $x^* \in X$. We can assume for the remainder that $\lim_{n\rightarrow\infty} x_n = x^*$.
	As we consider a finite measure we can find for each $\delta > 0$ a $k \in \N$ such that
	\begin{equation}
		V(C_k) \geq V(\uncset) - \delta. \label{eq:deltaApprox}
	\end{equation}
	Now $\mathcal{K}(C_k)\cap \coverSpace$ is, as in the proof above, again a compact set. Which implies that for a fixed $k$ the sequence $(W_n \cap C_k)_{n \in \N}$ has an accumulation point $W^*_k$. W.l.o.g. we assume that this accumulation point is unique. Otherwise, we switch notations to the corresponding subsequence.
	
	As $W_k^*$ is a compact set and $V$ is a finite measure, we conclude using Fatou's Lemma that
	\begin{equation*}
		V(W^*_k) \geq V(\limsup_{n \rightarrow \infty} (W_n \cap C_k)) \geq \limsup_{n \rightarrow\infty} V(W_n \cap C_k).
	\end{equation*}
	Because of Equation \eqref{eq:deltaApprox} we moreover know that $V(W_n \cap C_k) \geq V(W_n) -\delta$ for all $n \in \N$. Together with Equation \eqref{eq:Convergence against sup} we receive
	\begin{equation*}
		V(W^*_k) \geq V^* -\delta.
	\end{equation*}
	It is easy to check that $(x^*,W^*)$ is feasible, where we let $W^* = \bigcup_{k\in\mathbb{N}} W^*_{k}$. As $\coverSpace$ is a $\sigma$-algebra we can guarantee $W^* \in \coverSpace$ and by the continuity of measures we have $V(W^*)= \lim_{n \to \infty} V(W_n) = V^*$ such that $(x^*,W^*)$ is a maximizer of $\inverseRobustProblem$.
\end{proof}

After ensuring the existence of a solution, we can ask which properties of the original problem described by $f,g$ and $\uncset$ induce which structure of $W^*$. One property that we will use later in the discussion of an example problem in Section~\ref{sec: examples} is the inheritance of convexity.

\begin{lemma}\label{lemma: solution convex}
	If a given IROP instance has a maximizer~$(x^*,W^*)$, and $f(x^*,\cdot)$, $g(x^*,\cdot)$ are convex functions w.r.t. $u \in conv(\uncset)$ -- where $conv(\uncset)$ denotes the convex hull of $\uncset$ --, the merit function~$V$ satisfies Assumption~\ref{assumption: objective function V} and the cover space satisfies $\tilde{W}^* := conv(W^*) \cap \uncset \in \coverSpace$, then the decision~$(x^*, \tilde{W}^*)$  is also a maximizer of the problem.
\end{lemma}
\begin{proof}
	Let us denote the optimal solution of the IROP instance as $(x^*,W^*)$. We argue by showing that the choice~$(x^*,\tilde{W}^*)$ satisfies $V(W^*) \leq V(\tilde{W}^*)$ and that this choice is feasible w.r.t. the inverse robust constraints.
	
	By definition we know $W^* \subseteq \tilde{W}^* \subseteq \uncset$ and by Assumption~\ref{assumption: objective function V} that implies $V(W^*) \leq V(\tilde{W}^*)$. In order to prove that $\tilde{W}^*$ is feasible, we choose any arbitrary $u \in \tilde{W}^*$. By the definition of $\tilde{W}^*$ there exist $w_1,w_2 \in W^*, \lambda \in [0,1]$ such that
	\begin{align*}
		f(x^*,u) = f(x^*,\lambda w_1 + (1-\lambda) w_2)
	\end{align*}
	holds. Due to the convexity of $f$ w.r.t. $w \in \uncset$ and the feasibility of $W^*$ we know that 
	\begin{align*}
		f(x^*,\lambda w_1 + (1-\lambda) w_2) &\leq \lambda f(x^*,w_1) + (1-\lambda) f(x^*,w_2)\\
		&\leq \lambda (f^* + \epsilon)+ (1-\lambda) (f^*+\epsilon)\\
		&= f^* + \epsilon
	\end{align*}
	holds as well. Since $u \in \tilde{W}^*$ was chosen arbitrarily we know that
	\begin{align*}
		f(x^*,u) \leq f^* + \epsilon \quad \forall u \in \tilde{W}^*.
	\end{align*}
	
	Analogously we show $g(x^*,u) \leq 0 \ \forall u \in \tilde{W}^*$. Furthermore we know that $\extelem \in W^* \subseteq \tilde{W}^*$ and consequently $\tilde{W}^*$ is feasible and the claim holds.
\end{proof}

Next we will show that the continuity of the describing functions $f,g$ w.r.t$.$ $u$ will imply the (relative) closedness of $W^*$ (w.r.t$.$ $\uncset$).

\begin{lemma}\label{lemma: solution closed}
	If a given IROP instance has a maximizer~$(x^*,W^*)$, and $f(x^*,\cdot)$, $g(x^*,\cdot)$ are continuous functions w.r.t$.$ $u$ and the objective function~$V$ satisfies Assumption~\ref{assumption: objective function V} and the cover space satisfies $\tilde{W}^* := \closure{W^*} \cap \uncset \in \coverSpace$, then the decision $(x^*,\tilde{W}^*)$ -- where $\closure{W^*}$ denotes the closure of $W^*$ -- is also a maximizer of the problem.
\end{lemma}
\begin{proof}
	Let us denote the optimal solution of the inverse robust problem as $(x^*,W^*)$. We will argue by showing that the choice $(x^*,\tilde{W}^*)$ satisfies $V(W^*) \leq V(\tilde{W}^*)$ and that this choice is feasible w.r.t$.$ the inverse robust constraints.
	
	By definition we know $W^* \subseteq \tilde{W}^* \subseteq \uncset$ and by Assumption~\ref{assumption: objective function V} this implies $V(W^*) \leq V(\tilde{W}^*)$. Next we show that $\tilde{W}^*$ is feasible:
	
	Therefore we choose any arbitrary $u \in \tilde{W}^*$. By the definition of $\tilde{W}^*$ there exist a sequence $(w_n)_{n \in \mathbb{N}} \subseteq W^*$ such that
	\begin{align*}
		& \lim_{n \to \infty} w_n = u \text{ and} \\
		& f(x^*,w_n) \leq f^* + \epsilon \ \forall n \in \mathbb{N}
	\end{align*}
	
	holds. Due to the continuity of $f$ w.r.t$.$ $w \in \uncset$ we know that 
	\begin{align*}
		f(x^*,u) &= f(x^*,\lim_{n \to \infty} w_n)\\
		& = \lim_{n \to \infty} f(x^*,w_n)\\
		&\leq f^* + \epsilon
	\end{align*}
	holds as well. Since $u \in \tilde{W}^*$ was chosen arbitrarily we know that
	\begin{align*}
		f(x^*,u) \leq f^* + \epsilon \quad \forall u \in \tilde{W}^*
	\end{align*}
	
	Analogously we show $g(x^*,u) \leq 0 \ \forall u \in \tilde{W}^*$. Furthermore we know that $\extelem \in W^* \subseteq \tilde{W}^*$ and consequently $\tilde{W}^*$ is feasible and the claim holds.
\end{proof}

Last, but not least we will specify conditions for the boundedness of $W^*$:
\begin{lemma}\label{lemma: solution bounded}
	If a given IROP instance has a maximizer~$(x^*,W^*)$ and $h(x^*,\cdot) \coloneqq \max\{f(x^*,\cdot), g(x^*,\cdot)\}$ is a coercive function w.r.t$.$ $u$ or $\uncset$ is bounded, then the set $W^*$ is bounded.
\end{lemma}
\begin{proof}
	For the sake of contradiction, we assume that $W^*$ is unbounded. If $\uncset$ is bounded, this is a contradiction to $W^* \subseteq \uncset$. If $h(x^*,\cdot)$ is coercive and $W^*$ is unbounded, then there exists a sequence $(w_n)_{n \in \mathbb{N}}$ such that $w_n \in W^*$ and $\lim_{n \to\infty} ||w_n|| = \infty$. As $(x^*,W^*)$ is assumed to be a maximizer and therefore is feasible, we conclude that
	\begin{align*}
		h(x^*,w_n) \leq \max\{f^*+\epsilon,0\} < \infty \ \forall n \in \N
	\end{align*}
	holds. This contradicts the coercivity of $h(x^*,\cdot)$ that guarantees for every unbounded sequence $(u_n)_{n \in \mathbb{N}} \subseteq \uncset$
	\begin{align*}
		\lim_{n \to \infty} h(x^*,u_n) = \infty. 
	\end{align*}
	This settles the proof.
\end{proof}

\section{Choice of cover space}\label{sec: cover space choice}
Given an optimization problem as in $\inverseRobustProblem$, we have to specify the cover space $\coverSpace$ to define the problem. This section illustrates some example cover spaces which satisfy Assumption~\ref{assumption: cover space W} such as the whole power set, the Borel-$\sigma$-algebra of the uncertainty set or parameterized families of subsets. These cover spaces can be used together with Theorem~\ref{thm: compact set} to generate a solution of the $\inverseRobustProblem$.

\textbf{The whole power set.} At first we consider the whole power set $\coverSpace = 2^\uncset$ and show that it satisfies Assumption~\ref{assumption: cover space W}. Therefore, we assume that the uncertainty set $\uncset$ is compact. We then know that for an arbitrary $W \in 2^\uncset$ the closure $\closure{W} \subseteq \uncset$ and therefore $\closure{W} \in \coverSpace$. This means that the first condition of Assumption~\ref{assumption: cover space W} holds. The second condition
\begin{align*}
	\mathcal{K}(\uncset) \cap \coverSpace = \mathcal{K}(\uncset) \cap 2^\uncset = \mathcal{K}(\uncset) \text{ is complete}
\end{align*}
holds because $\uncset$ is compact (see \cite{Hausdorff1957}). The last condition holds because $\nominalxi \in \uncset$ and therefore $\{\nominalxi\} \in 2^\uncset = \coverSpace$. Consequently, $\coverSpace = 2^\uncset$ satisfies Assumption~\ref{assumption: cover space W} for any compact uncertainty set $\uncset \subseteq \R^m$. Because the power set is in a sense big enough to contain a solution for $\inverseRobustProblem$, it is not surprising that it satisfies Assumption~\ref{assumption: cover space W}. In the next steps we gradually decrease the size of the cover space.

\textbf{Borel-$\sigma$-algebra.} A more suitable choice, especially if we want to consider measures, is a $\sigma$-algebra. We are interested in the cover space $\coverSpace = \mathcal{B}(\uncset)$ where $\mathcal{B}(\uncset)$ denotes the Borel-$\sigma$-algebra on the closed set $\uncset$.\\
By definition the Borel-$\sigma$-algebra contains all closed subsets of $\uncset$, especially all compact sets and $\{\nominalxi\}$. Therefore, the first and last condition of Assumption~\ref{assumption: cover space W} hold if $\uncset$ is a closed set. As the Borel-$\sigma$-algebra on a close set contains also all compact sets, the completeness condition then also follows. This means that for a closed set $\uncset$ Assumption~\ref{assumption: cover space W} is satisfied.

Also the additional assumptions of Theorem \ref{thm: Existence Measure} on a cover space $\coverSpace$ holds for the Borel-$\sigma$-algebra. As the sequence $(C_k)_{k \in \N}$ of compact sets unit balls around the nominal solution with increasing radius $k\in \N$ can be considered.

\textbf{Sets described by continuous inequality constraints.}
Another step towards a numerically more controllable cover space is done by considering 
\begin{align*}
	& \coverSpace = \{ W(\delta), \delta \in \mathcal{C}(\uncset,\R)\}\\
	\text{with } & W(\delta) = \{u \in \uncset \ : \ \delta(u) \leq 0 \}.
\end{align*}
In this cover space each element is described by a continuous inequality constraints on $\uncset$. Specifying
\begin{align*}
	\delta_{\nominalxi}: \uncset \to \R, u \mapsto \|u-\nominalxi\|,
\end{align*} 
we can guarantee that $W(\delta_{\nominalxi}) = \{\nominalxi\}$ is in $\coverSpace$. Furthermore, the inclusion $\mathcal{K}(\uncset) \subseteq \coverSpace$ holds as for any compact set $A \in \mathcal{K}(\uncset)$ the distance function 
\begin{align*}
	\delta_A : \uncset \to \R, u \mapsto d(u,A)
\end{align*}
is continuous. Because for a compact $A$ the points satisfying $\delta_A(u) \leq 0$ are exactly the points $u \in A$, we can conclude that $A = W(\delta_A) \in \coverSpace$ holds for an arbitrary $A \in \mathcal{K}(\uncset)$.\\ 
Consequently, $\mathcal{W}$ satisfies Assumption~\ref{assumption: cover space W}.

\textbf{Sets described by a family of continuous inequality constraints.}
Last, but not least, we consider cover spaces that are induced by elements of a \emph{design space} $D \subseteq \R^q$. Using so called \emph{design variables} $d \in D$ we focus on the cover space
induced sets
\begin{align*}
	& \coverSpace = \{ W(d), d \in D\},\\
	\text{with } & W(d) = \{ u \in \uncset \ : \ v(u,d) \leq 0 \},
\end{align*}
where $v(\cdot,d): \uncset \to \R$ is a continuous function w.r.t$.$ $u \in \uncset$ for all $d \in D$. Consequently, all sets $W(d)$ are closed for any $d \in D$ such that the first condition of Assumption~\ref{assumption: cover space W} is fulfilled by construction. The other two conditions will not automatically hold and depend on the choice of the function $v$ and the set $D$. As an easy positive example one could think about $v(u,d) = ||u-\nominalxi|| - d$. This way the function $v$ induces the elements $W(d) = B_d(\nominalxi)$ for $d \in D$. The choice $d = 0$ ensures $\{\nominalxi\} \in \coverSpace$. Choosing $D=[0,r]$ for some $r\in \mathbb{R}$ will then ensure that $\coverSpace$ satisfies Assumption \ref{assumption: cover space W}.

However it is possible to construct examples where there exists no solution to $\inverseRobustProblem$. Consider for example $\uncset=[-1,1]$, $\nominalxi=0$, $D=[0,1]$ and
\begin{equation*}
	v(u,d):=\max(u\cdot(1-d),-d-u).
\end{equation*}
We then can set $W(d)=[-d,0]$ for all $d \in [0,1)$, however for $d=1$ obtains $W(1)=[-1,1]$. Given $X=[-1,1]$, the objective function as $f(x,u):=x$ and the constraint as $g(x,u):=0.5-x+u$, and consider the corresponding inverse robust problem. Here, we can define a feasible point $(x,d)$ for every $d \in [0,1)$. If we take the length of the interval $W(d)$ as a merit function, we are interested in the choice $d = 1$. Because $(x,1)$ is always infeasible for any $x \in [-1,1]$, there exists no solution to $\inverseRobustProblem$.

This is not surprising. The choice of a finite dimensional design space $D \subseteq \R^q$ with $q \in \N$ reduces the inverse robust optimization problem to a \emph{general semi-infinite problem (GSIP)} as we can rewrite $\inverseRobustProblem$ in this case as:
\begin{align*} \label{GSIPReformulation}
	\GSIPApproximation \qquad  \max_{x \in \R^n, d \in D} &  V(W(d)) \\
	\text{ s.t. }  \quad & \objfunc(x,u) \leq f^* + \epsilon \quad \ \forall u \in W(d)\\
	& \consfunc(x,u) \leq 0 \qquad \quad \ \ \forall u \in W(d)\\
	&\nominalxi \in W(d)
\end{align*}
For GSIP it is well known that the solution might not exist. For a more detailed discussion we refer to \cite{Stein2003}. A survey of GSIP solution methods is given in \cite{Stein2012}.

A possibility to ensure the existence of a solution and to design discretization methods is to assume the existence of a fixed compact set $Z\subseteq \mathbb{R}^{\tilde{m}}$ and a continuous transformation map $t: D\times Z \rightarrow \mathbb{R}^m$, such that for every $d \in D$ holds
\begin{equation*}
	t(d,Z)=W(d).
\end{equation*}
In this case the GSIP reduces to a standard semi-infinite optimization problem and a solution can be guaranteed by assuming compactness of $X$. This idea is used by the transformation based discretization method introduced in \cite{SchwientekSeidel2020}.

\section{Comparison to other robustness approaches} \label{sec: concept comparison}
As we have pointed out in the introduction, there exist several concepts similar to the inverse robustness. Here we briefly discuss how the \emph{stability radius}, the \emph{resilience radius} and the \emph{radius of robust feasibility} fit in the context of inverse robustness. 

\subsection{Stability radius and resilience radius} \label{subsec: IROPvsSR}
The \emph{stability radius} provides a measure for a fixed solution on how much the uncertain parameter can deviate from a nominal value while still being an (almost) optimal solution. There are many publications regarding the stability radius in the context of (linear) optimization. For an overview, we refer to \cite{Weiss2016}. 

Let $\bar{x} \in \R^n$ denote an optimal solution to a parametrized optimization problem with fixed parameter $\extelem \in \uncset$ of the form
\begin{align*}
	\min_{x \in \bar{X}} \; &  f(x, \extelem),
\end{align*}
where the set of feasible solutions is denoted by~$\bar{X} \subseteq \R^n$. The solution~$\bar{x}$ is called \emph{stable} if there exists an~$\rho> 0$ such that $\bar{x}$ is $\epsilon$-optimal, i.e. $f(\bar{x}, u) \leq f(x, u) + \epsilon$ for all feasible solutions~$x\in \bar{X}$ with an $\epsilon \geq 0$, for all uncertainty scenarios~$u\in B_\rho(\bar{u})$. The \emph{stability radius} is given as the largest such value~$\rho$. Altogether, it can be calculated for a given solution~$\bar{x} \in \bar{X}$ and a budget~$\epsilon \geq 0$ by
\begin{align*}
	\max_{\rho \geq 0} \; & \rho \\
	\text{ s.t. } \; & f(\bar{x}, u) \leq f(x, u) + \epsilon  \quad \forall x\in \bar{X}, \forall u \in B_{\rho}(\bar{u}).
\end{align*}

While the stability radius compares a fixed decision $\bar{x}$ with all other feasible choices $x \in \bar{X}$, the \emph{resilience radius} allows to change the former optimal decision to gain feasibility. For an introduction into this topic we also recommend \cite{Weiss2016}.

Given a budget w.r.t$.$ the objective value, the resilience radius searches the biggest ball centered at a given uncertainty scenario that satisfies feasibility with respect to some original problem. If we denote the optimal solution of a parametrized optimization problem with fixed parameter $\bar{u}$ again by $\bar{x}$, then $\bar{x}$ is called \emph{$B$-feasible} for some budget~$B \in \R$ and some scenario~$u \in \uncset$ if $f_u(\bar{x})$ is lower than $B$.

Then, the \emph{resilience ball} of a $B$-feasible solution $\bar{x}$ around a fixed scenario $\extelem \in \uncset$ is defined as the largest radius $\rho \geq 0$ such that $\bar{x}$ is $B$-feasible for all scenarios in this ball. Finally the resilience radius is the biggest radius of a resilience ball around some~$x \in \bar{X}$ and can be calculated by solving the following optimization problem. 
\begin{align*}
	\max_{x \in \bar{X}, \rho \geq 0} \; & \rho \\
	\text{ s.t. } \; & f(x, u) \leq B  \quad \forall u \in B_{\rho}(\bar{u}).
\end{align*}

To compare these concepts with the concept of inverse robustness, we fix the uncertainty set as $\uncset \coloneqq \R^m$ and define $W(d) \coloneqq B_d(\extelem) \subseteq \uncset, d \in D \coloneqq [0, \infty)$. Furthermore we want to measure $V(W(d)) \coloneqq vol(W(d))$. If we assume that we can describe $\bar{X}$ by finite many inequality constraints, i.e. there exists an finite index set $|I| < \infty$ and continuous functions $g_i : \bar{X} \to \R$ for all $i \in I$ such that $\bar{X} = \{ x \in \R^n \ : \ g_i(x) \leq 0, i \in I\}$ holds, we can define the problem as follows.
\begin{align*}
	\max_{x \in \R^n,d \geq 0} \; & vol(W(d)) \\
	\text{ s.t. } \; & \uncobjfunc(x) \leq f_{\extelem}(\bar{x}) + \epsilon  \quad \forall u \in B_{d}(\bar{u}),\\
	& g_i(x) \leq 0 \quad \forall i \in I, \forall u \in B_d(\extelem).
\end{align*}

This problem can be simplified to the following problem. 
\begin{align*}
	\max_{x \in \bar{X},d \geq 0} \; & d \\
	\text{ s.t. } \; & \uncobjfunc(x) \leq f(\bar{x}, \extelem) + \epsilon  \quad \forall u \in B_{d}(\bar{u}).
\end{align*}

We see that the difference between the stability radius and the inverse robust problem is that the stability radius checks the budget constraint not only for all scenarios $u \in B_\rho(\extelem)$, but also for all feasible $x \in \bar{X}$, while the inverse robust concept allows to choose a new argument~$x \in \bar{X}$ such that the radius is maximized while staying close to the nominal objective value $f^* \coloneqq f(\bar{x}, \extelem)$.

Furthermore, by defining the budget~$B \coloneqq f^*+\epsilon$, we obtain that the resilience radius can be seen as a variant of the IROP, where we are searching an optimal set~$W$ in the set of balls around the nominal scenario~$\bar{u}$.

\subsection{Radius of robust feasibility}
The \emph{radius of robust feasibility} is a measure on the maximal 'size' of an uncertainty set under which one can ensure the feasibility of the given optimization problem. It is discussed for example in the context of convex programs \cite{goberna2016radius}, linear conic programs \cite{goberna2021calculating} and mixed-integer programs \cite{liers2021radius}.

The radius of robust feasibility~$\rho_{RFF}$ is defined as 
\begin{align*}
	\rho_{RFF} & \coloneqq \sup \{ \alpha \geq 0: (\text{PR}_\alpha) \text{ is feasible}\},
	\intertext{where}
	(\text{PR}_\alpha) \quad & \min_{x \in \R^n} c^\top x \\
	\text{ s.t. }\quad  & A x \leq b \qquad \forall (A,b) \in U_\alpha,
\end{align*}

with $U_\alpha \coloneqq (\bar{A},\bar{b}) + \alpha Z$ for nominal values~$\bar{A}\in\R^{m\times n}, \bar{b}\in\R^{m}$ and $Z$ being a compact and convex set. Since we are only interested in the feasibility of $(\text{PR})_\alpha$, we can replace its objective function by $0$. Therefore, given a fixed, convex, compact set~$Z$ we can compute the radius of robust feasibility by solving the following optimization problem:
\begin{align*}
	\rho_{RFF} \quad \coloneqq \sup_{x \in \R^n, \alpha \geq 0} \alpha \\
	\text{ s.t. } \quad & A x\leq b \qquad \forall (A,b) \in U_\alpha,
\end{align*}

with $U_\alpha \coloneqq (\bar{A},\bar{b}) + \alpha Z$. To compare this concept to the concept of inverse robustness, we define $W(d) \coloneqq \extelem + dZ$ as subsets of $\uncset \coloneqq \R^{mn + m}$ characterized by $d \in D \coloneqq [0, \infty)$. Furthermore we use the objective function~ $V(W(d)) \coloneqq vol(W(d))$. Since we do not consider an objective function, we drop the budget constraint. Thus, given a nominal scenario~$\extelem \coloneqq (\bar{A},\bar{b}) \in\uncset$ and a function~$g(x, (A,b)) \coloneqq Ax - b$, we obtain the inverse robust problem
\begin{align*}
	\sup_{x \in \R^n,d \geq 0} \; & vol(W(d))\\
	\text{ s.t. } \;  & g(x, u) \leq 0 \quad \forall u=(A,b) \in \extelem + dZ
\end{align*}

that can be reformulated as
\begin{align*}
	\sup_{x \in \R^n,d \geq 0} \; & d \\
	\text{ s.t. } \; & A x \leq b \qquad \forall (A,b) \in U_\alpha. 
\end{align*}

We see that this way to calculate the radius of robust feasibility can be interpreted as a special inverse robust optimization problem, where we are searching for sets of the form $\extelem + \alpha Z$ and where we are not interested in the budget constraint. The radius of robust feasibility allows us to analyze problems without any pre-defined values such as the given budget $\epsilon \geq 0$ or the nominal solution $f^*$. But, the certain structure of the set~$Z$ is rather restrictive and we do not now how the objective value of a solution~$x$ with a large radius~$\alpha$ deviates from the nominal solution value.

\section{Examples}\label{sec: examples}
After introducing and investigating the concept from a mathematical point of view, we present some further properties using three examples.
\subsection{Dependency on budget}

The first example illustrates that the solution of an inverse robust optimization problem does not depend on the choice of the uncertainty set $\uncset$ in general, but instead on the available budget $\epsilon \geq 0$. Therefore, we focus on the following parametric optimization problem
\begin{align*}
	(\mathcal{P}_u) \qquad \min_{x \in [0,2]} \quad & x + u^2 \\
	\text{ s.t. } \quad & -x+u \leq 0,
\end{align*}
where we consider a parametrized uncertainty set $\uncset(a) = [0,a]$ with $a \geq 1$. Choosing $\nominalxi = 0$ leads to the nominal solution
\begin{align*}
	f^* = 0.
\end{align*}
The corresponding inverse robust optimization problem with $\coverSpace = \{[0,d], d \in [0,a]\}$ and the merit function $V(W) = vol(W)$ has the form
\begin{align*}
	\inverseRobustProblem \qquad \max_{x \in [0,2], d \in [0,a]} \quad & d \\
	\text{ s.t. } \quad & x+u^2 \leq \epsilon \ \forall u \in [0,d]\\
	& -x+u \leq 0 \ \forall u \in [0,d].
\end{align*}
Please note that due to Lemma~\ref{lemma: solution convex} - \ref{lemma: solution bounded} considering the cover space $\mathcal{B}(\uncset)$ would lead to an equivalent problem.\\
The inverse robust optimization problem has the solution $x^* = d^* = -\frac{1}{2} + \sqrt{\frac{1}{4} + \epsilon}$ for $\epsilon \in [0,2]$. This solution is independent of the uncertainty set parameter $a \geq 1$ and thus allows modelling mistakes in the specification of $\uncset$.\\
On the contrary, the corresponding strict robust optimization problem
\begin{align*}
	\qquad \min_{x \in [0,2]} \quad & \max_{u \in [0,a]} x + u^2 \\
	\text{ s.t. } \quad & \max_{u \in [0,a]} -x+u \leq 0
\end{align*}
has the solution $x^*(a) = a$ and $f^*(a) = a^2+a$ for $a \in [1,2]$ and no solution for $a > 2$. This dependence makes it crucial to think about the specification of $\uncset$ beforehand.

\subsection{Extreme scenarios}
In the next example we want to study the effect of extreme scenarios that can occur especially in nonlinear optimization.
We consider for $u \in [0,1]$ the following parameterized optimization problem. 
\begin{align*}
	\min_{x\in [0,1]}&\ x\\
	\text{ s.t. } \enspace&\ x \geq u^{100}
\end{align*}
If we consider the nominal scenario $\overline{u} = 0$, then the nominal objective value $f^*=0$, we receive for chosen budget $\epsilon\geq0$ and the same cover space as before the following inverse robust optimization problem:
\begin{align*}
	\max_{x \in [0,1], d \in [0,1]} &\ d\\
	\text{ s.t. } \enspace &\ x \leq \epsilon\\
	&\ x \geq u^{100} \quad \forall u \in [0,d].
\end{align*}
The optimal solution is given by $x^*=\epsilon$ and $d^*=\epsilon^{\frac{1}{100}}$. On the other hand, choosing an uncertainty set $\uncset=[0,a]$ with $a \in [0,1]$ before solving the classical robust counterpart
\begin{align*}
	\min_{x \in [0,1]} &\ x\\
	\text{ s.t. } \enspace &\ x \geq u^{100} \quad \forall u \in [0,a]
\end{align*}
leads to the optimal solution $x^*=a^{100}$.

A very conservative choice in classical robust optimization would be to choose $a=1$ which would also lead to a high price for robustness and $x^*=1$ as optimal robust solution. In inverse robustness we would first choose a budget $\epsilon$. Lets say $\epsilon=0.1$. The price of robustness we would pay is fixed. The maximal uncertainty set we can cover with this budget has a size of $d^*=0.1^{1/100} \approx 0.977$. This means that we only need pay a price of $0.1$, but cover more than $95\%$ of the area of the original uncertainty set.

Choosing a smaller apriori set with $a=0.5$ leads to a very small price to pay to achieve robustness, $\frac{1}{2^{100}}$. However, if one is ready to pay more for robustness, e.g. $\epsilon = 0.001$ one can cover more than $90\%$ of the area of the original uncertainty set, which is a large part of all scenarios.

The reason for this phenomenon is that $u=1$ is for this problem an extreme scenario. Covering it, has a high price in optimality. In the inverse robust formulation we tend to leave out extreme scenarios and try to find a good solution on the remainder.

The first two examples show two differences to a robust counterpart. First the solution depends directly on the price we are willing to pay to achieve robustness and not on the apriori choice of the uncertainty set. Second the inverse robust optimization will leave out extreme scenarios making it a less conservative approach for robust optimization.

\subsection{A bi-criteria problem} \label{sec: bicriteria problem}
In a final example, we want to demonstrate the flexibility of the new approach. Therefore, we consider a probability measure as the merit function and consider a parameterized bi-criteria optimization problem with an inequality constraint. This constraint is linear with respect to the decision parameter $x \in \R$, but nonlinear in the uncertainty~$u \in \uncset$ such that a solution for a nominal scenario can be easily computed, while the analysis of the behavior with respect to the uncertainty is not trivial. We consider the following bi-criteria optimization problem

\begin{align*}
	\min_{x \in \R} \quad & \{f_1(x,u) \coloneqq -x+u, f_2(x,u) \coloneqq 2x - u\}\\
	\text{ s.t. } & g(x,u) \coloneqq x(u-1) + \exp(u) -1 \leq 0.
\end{align*}

Fixing the nominal scenario~$\nominalxi = 0$, we can compute the Pareto-front $F^*$ as
\begin{align*}
	F^* = \{ t (-1,2)^\top, t \geq 0\}.
\end{align*}

After considering the original problem using a fixed nominal scenario, we now focus on the inverse robust problem. Therefore we allow a generic budget~$\epsilon = (\epsilon_1,\epsilon_2)^\top \in \R_{\geq 0}^2$ and fix a point on the Pareto-front, i.e. $f^* = (-2,4)^\top \in F^*$.

Additionally, we assume that our uncertainty is given by a normal-distributed random variable $u \sim \mathcal{N}(0,1)$. Therefore we let $\coverSpace=\mathcal{B}(\R)$, where $\mathcal{B}(\R)$ denotes the $\sigma$-algebra of Borel-measurable sets of $\R$. We want to maximize the probability of uncertainties we can handle while not loosing more than $\epsilon$ from our solution $f^*$, which leads to:
\begin{align*}
	\sup_{x \in \R, W \in \mathcal{B}(\R)} \, &  \mathbb{P}(u \in W) \\
	\text{ s.t. } \, & f_1(x,u) \leq f^*_1 + \epsilon_1 \qquad \quad \forall u \in W\\
	& f_2(x,u) \leq f^*_2 + \epsilon_2 \qquad \quad \forall u \in W\\
	& g(x,u) \leq 0 \qquad \qquad \qquad \forall u \in W\\
	& 0 \in W
\end{align*}

The statements in Section \ref{sec: model} were all formulated for only one objective. However, it is easy to check that all statements carry over to the case of multiple objectives and can be used to investigate the present example.

As $f_1$ is increasing and $f_2$ is decreasing in $x$ and $0\in W$, we know that depending on the budget $\epsilon$, we can restrict the search space for $x$ to a bounded interval. According to Theorem \ref{thm: Existence Measure} then an optimal solution $(x^*,W^*)$ exists and we can replace the supremum of the last problem by a maximum

As $W \in \mathcal{B}(\R)$ is too large as a search space, we reduce the dimension by searching for intervals $W(d) \coloneqq [d_1,d_2]$ defined by elements of the design space $D \coloneqq \{ d \in \R^2 \ : \ d_1 \leq d_2\}$.
Since $f_1(x,u) = -x+u, f_2(x,u) \coloneqq 2x -u$ are convex w.r.t. $u \in \R$ as linear functions and $g$ is convex w.r.t. $u$ because of $\partial_u^2 g(x,u) = \exp(u) > 0$ for all $u \in \R$, we can use Lemma~\ref{lemma: solution convex}. As the describing functions $f_1,f_2,g$ are continuous w.r.t. $u$ we can use Lemma~\ref{lemma: solution closed} and by Lemma~\ref{lemma: solution bounded} we are looking for a bounded solution set as $h(x,u) = \max\{f_1(x,u),f_2(x,u),g(x,u)\}$ is a coercive function w.r.t. $u$ for any arbitrary $x \in \R$. Consequently the choice of $W(d) = [d_1,d_2], d_1,d_2 \in \R$ to search for a convex, closed, bounded set in $\R$ is appropriate.

We collect this simplification in the following proposition a proof is given in the Appendix.
\begin{proposition}[Reduced problem reformulation]\label{prop: reduced problem formulation}
	The inverse robust example problem can be simplified to the reduced inverse robust example problem given as:
	\begin{align*}
		\reducedExampleProblem \max_{x \in \R,d_1,d_2 \in \R^2} & \, \mathbb{P}(u \leq d_2) - \mathbb{P}(u \leq d_1) \\
		\text{ s.t. } & \,  -x+d_2 \leq -2 + \epsilon_1\\
		& \, 2x-d_1 \leq 4 + \epsilon_2\\
		& \, x(d_2 - 1) + \exp(d_2) - 1\leq 0\\
		& \, d_1 \leq 0\\
		& \, 0 \leq d_2 \leq 1\\
		& \, 0 \leq x
	\end{align*}
	Furthermore, this problem is a convex optimization problem w.r.t. $(x,d)^\top \in X \times D$ and has a solution for all $\epsilon \in \R^2_{\geq 0}$.
\end{proposition}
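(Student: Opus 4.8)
The plan is to start from the interval-restricted formulation licensed by Lemma~\ref{lemma: solution convex}, in which $W = [d_1,d_2]$ with $d_1 \le d_2$, and to turn each semi-infinite constraint ``$\forall u \in [d_1,d_2]$'' into finitely many pointwise constraints by locating its worst-case scenario explicitly. First I would rewrite the objective: since $u$ follows the standard normal law, which has a continuous distribution, $\probop(u \in [d_1,d_2]) = \probop(u \le d_2) - \probop(u \le d_1)$ and the endpoints carry no mass, so open versus closed intervals need not be distinguished. Next I would reduce the three constraint families. The map $u \mapsto f_1(x,u) = -x+u$ is increasing, so its supremum over $[d_1,d_2]$ is attained at $u=d_2$, giving $-x+d_2 \le -2+\epsilon_1$; the map $u \mapsto f_2(x,u) = 2x-u$ is decreasing, so its supremum is attained at $u=d_1$, giving $2x-d_1 \le 4+\epsilon_2$. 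For $g$ I would use that $g(x,\cdot)$ is convex in $u$ (as $\partial_u^2 g = \exp(u) > 0$), so its supremum over the interval is attained at an endpoint; hence $g(x,u)\le 0$ for all $u\in[d_1,d_2]$ is equivalent to the pair $g(x,d_1)\le 0$ and $g(x,d_2)\le 0$. Finally the membership constraint $0\in W$ becomes $d_1 \le 0 \le d_2$.

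The step I expect to be the main obstacle is justifying that only the upper-endpoint constraint $g(x,d_2)\le 0$ need be retained, i.e.\ that $g(x,d_1)\le 0$ is redundant, as in $\reducedExampleProblem$. The clean case is $x \ge 0$: then $d_1 \le 0$ gives $x(d_1-1)\le 0$ and $\exp(d_1)-1 \le 0$, so $g(x,d_1) = x(d_1-1) + \exp(d_1) - 1 \le 0$ automatically. The point requiring care is to secure $x \ge 0$ from the surviving constraints. Evaluating $g$ at the forced scenario $u = 0 \in W$ yields $g(x,0) = -x \le 0$, i.e.\ $x \ge 0$; and since $0 \in [d_1,d_2]$ and $g(x,\cdot)$ is convex, $g(x,0) \le \max\{g(x,d_1),g(x,d_2)\} \le 0$, so this is automatic once both endpoint constraints hold. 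In the reduced problem, where only $g(x,d_2)\le 0$ is kept, a short sign analysis shows that $g(x,d_2)\le 0$ with $0 \le d_2 < 1$ already forces $x \ge 0$, whereas $d_2 \ge 1$ is compatible with $g(x,d_2)\le 0$ only for $x < 0$. I would close the argument by checking that such configurations with $d_2 \ge 1$ are either infeasible for the linear constraints or never improve the objective $\probop(u\le d_2) - \probop(u\le d_1)$, so that dropping $g(x,d_1)\le 0$ leaves the optimal value unchanged; this is the delicate part of the equivalence.

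It remains to prove the two additional assertions. Non-linearity is immediate, since the third constraint contains $\exp(d_2)$ and the objective is the difference of Gaussian distribution functions $\probop(u\le d_2) - \probop(u\le d_1)$, neither of which is affine in $(x,d)$. For existence of a maximizer for every $\epsilon \in \R^2_{\ge 0}$ I would argue by compactness. Feasibility is witnessed by the nominal point $(x,d_1,d_2) = (2,0,0)$, where $W = \{0\}$: indeed $-2 \le -2+\epsilon_1$, $4 \le 4+\epsilon_2$, $g(2,0) = -2 \le 0$, and $d_1 = 0 = d_2$ satisfies $d_1 \le 0 \le d_2$. Boundedness follows from the linear constraints: $d_1 \le 0$ with $2x-d_1 \le 4+\epsilon_2$ gives $x \le 2 + \tfrac{\epsilon_2}{2}$, while $d_2 \ge 0$ with $-x+d_2 \le -2+\epsilon_1$ gives $x \ge 2 - \epsilon_1$; these in turn confine $d_2 \in [0,\ \epsilon_1 + \tfrac{\epsilon_2}{2}]$ and $d_1 \in [-2\epsilon_1-\epsilon_2,\ 0]$. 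Hence the feasible set is bounded, and it is closed because all constraint functions are continuous and the inequalities are non-strict, so it is compact. As the normal distribution function is continuous, the objective is continuous, and the Weierstrass theorem yields a maximizer; thus the supremum in the example problem is attained for all $\epsilon \in \R^2_{\ge 0}$.
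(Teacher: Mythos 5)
Your treatment of the objective, of the two budget constraints, and of existence (feasibility of $(2,0,0)$, boundedness of $x,d_1,d_2$ from the linear constraints, closedness, continuity of the Gaussian distribution function, Weierstrass) coincides with the paper's proof. The only methodological difference is the feasibility constraint: where you invoke convexity of $g(x,\cdot)$ and keep both endpoint constraints, the paper derives $x \geq 0$ from the forced scenario $u=0$ (i.e.\ $g(x,0) = -x \leq 0$) and then uses $\partial_u g(x,u) = x + \exp(u) > 0$ to conclude that $g(x,\cdot)$ is increasing for every originally feasible $x$, so that the worst case is the single endpoint $d_2$ and no lower-endpoint constraint ever appears.

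The step you defer, however, is a genuine gap, and your plan for closing it would fail: for large $\epsilon_1$ the configurations with $d_2 > 1$ and $x<0$ are neither infeasible for the linear constraints nor non-improving. Concretely, take $\epsilon_1 = 11$, $\epsilon_2 = 0$ and $(x,d_1,d_2) = \bigl(1-\exp(2),\, 2x-4,\, 2\bigr)$, i.e.\ $x \approx -6.39$, $d_1 = -2-2\exp(2) \approx -16.78$. Then $-x+d_2 = \exp(2)+1 \approx 8.39 \leq 9 = -2+\epsilon_1$, $2x - d_1 = 4 \leq 4+\epsilon_2$, $x(d_2-1)+\exp(d_2)-1 = 0 \leq 0$, and $d_1 \leq 0 \leq d_2$, so this point is feasible for $\reducedExampleProblem$ with objective value $\mathbb{P}(u \leq 2) - \mathbb{P}(u \leq -2-2\exp(2)) > 0.97$; whereas every point feasible for the interval-restricted original problem satisfies $x \geq 0$ (take $u = 0$ in the semi-infinite feasibility constraint) and therefore $d_2 < 1$, so its value stays below $\mathbb{P}(u \leq 1) < 0.842$. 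Hence for such budgets $\reducedExampleProblem$ is a strict relaxation, not an equivalent reformulation, and no case-checking can rescue the equivalence: one must either retain a constraint that enforces $x \geq 0$ (your lower-endpoint constraint $g(x,d_1) \leq 0$ does the job, since with both endpoints convexity yields $g(x,0)\leq 0$; equivalently add $-x \leq 0$ explicitly), or restrict the claim to budgets for which no such spurious point is feasible (one can check this requires $\epsilon_1$ above roughly $10$, so the paper's numerical range $\epsilon_1 \leq 5$ is unaffected). Note that this defect is not created by your route: the paper's own proof only establishes the direction ``originally feasible implies reduced-feasible'' and silently reuses $x \geq 0$ afterwards, which is valid for the original problem but not for the printed reduced one; your convexity approach merely makes the missing constraint visible. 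So the gap you flagged is real, and the proposition as stated --- for all $\epsilon \in \R^2_{\geq 0}$ --- cannot be proved without one of these amendments.
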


In Figure~\ref{fig: obj func values} the objective values for different budget $\epsilon_1$ and $\epsilon_2$ are shown. We start with a solution that does not allow any uncertainty, i.e. $P(W(d^*)) = 0$ for $\epsilon_1 = \epsilon_2 = 0$. If we allow to differ from the nominal values~$f_1^*$ or $f_2^*$, we see that we can first gain more robustness by increasing $\epsilon_1$. For each $\epsilon_2$ there is an $\epsilon_1'$ such that for $\epsilon_1 \geq \epsilon_1'$ the solution does not change anymore. A proof of this can be found in Proposition~\ref{prop: increasing budgets} in the appendix. 
For larger $\epsilon_2$ the objective value converges towards $\mathbb{P}(u\leq 1) \approx 0.842$. By the equivalent formulation $\reducedExampleProblem$ it is clear that this is an upper bound for IROP. However for large $k\in \N$ the point
\begin{align*}
	x=&\ -k\left(1-\exp\left(1-\frac{1}{k}\right)\right)\\
	d_1=&\ -k\\
	d_2=&\ 1-\frac{1}{k}
\end{align*}
is feasible for $\reducedExampleProblem$ with the budgets $\epsilon_1=0$ and $\epsilon_2 = -2k(1-\exp(1-\frac{1}{k}))+k$. The objective value of this point converges towards $\mathbb{P}(u\leq 1)$.
\begin{figure}[htbp]
	\centering
	\includegraphics[width=0.6 \textwidth]{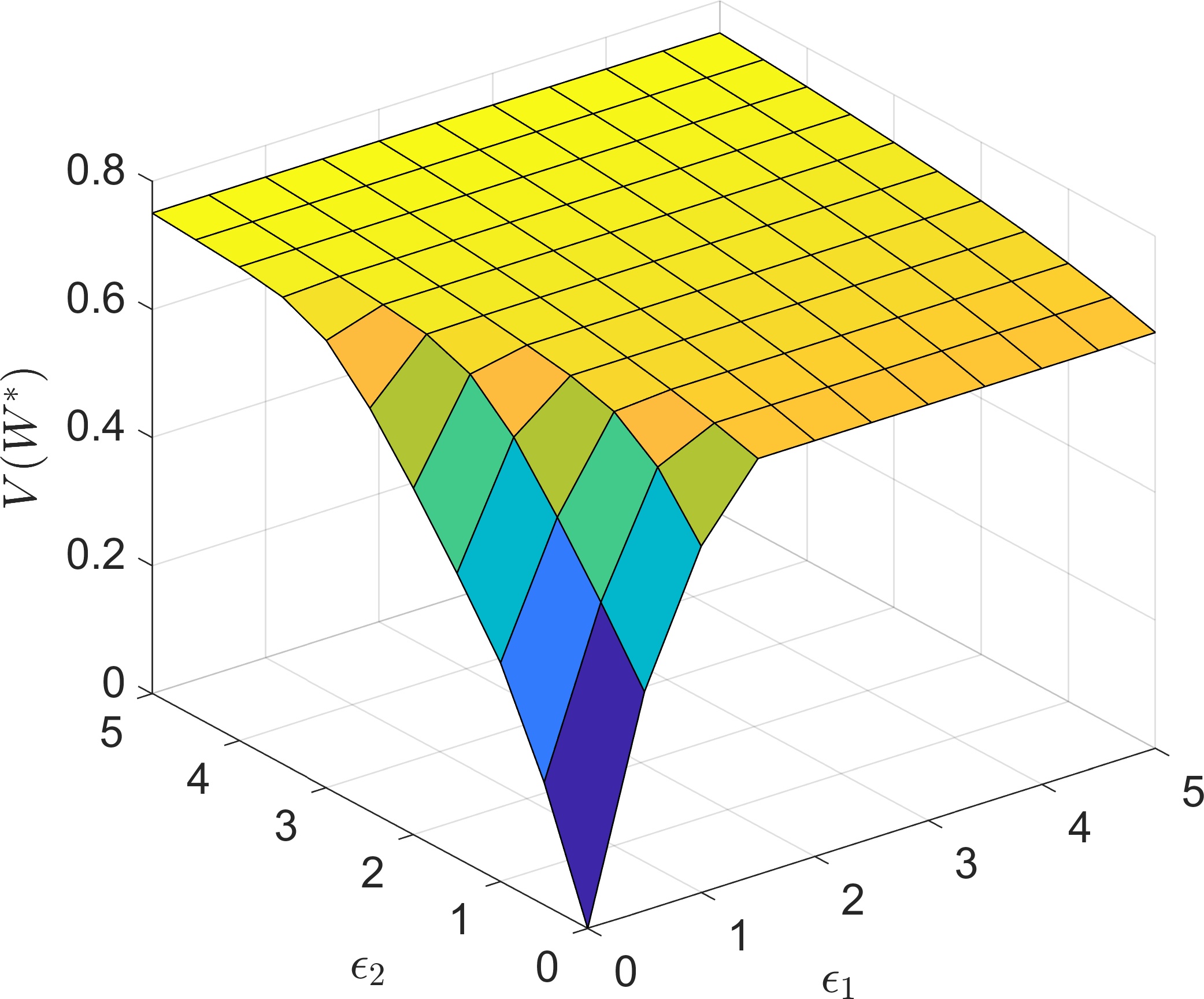}
	\caption{Optimal objective value $V(W^*)$ for different values $\epsilon_1,\epsilon_2 \geq 0$.}\label{fig: obj func values}
\end{figure}

Some of the optimal solution sets~$W^*$ and the robustified decisions~$x^*$ can be seen in Figure~\ref{fig: opt val fix eps2} and Figure~\ref{fig: opt val fix eps1} for different values of $\epsilon$. One could think that the solution sets satisfy an ordering w.r.t. $\subseteq$ if $\epsilon$ increases component-wise. But as on can see this is in general not the case as changes to the decision~$x^*(\epsilon)$ could destroy these inclusions. 

\begin{figure}[htbp]
	\centering
	\includegraphics[width= 0.6 \textwidth]{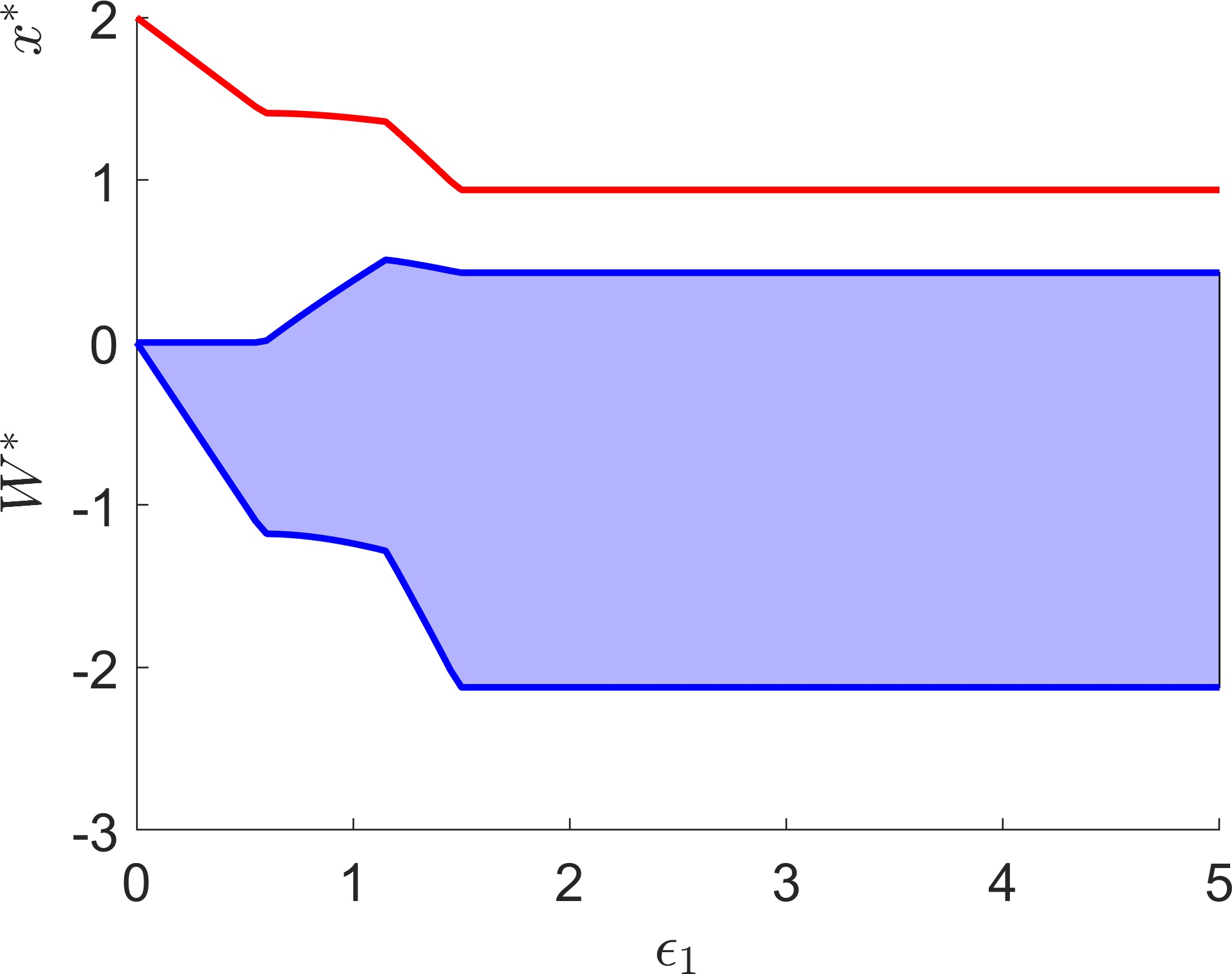}
	\caption{Optimal arguments $x^*$ as red line and $W^*$ as blue area for different values $\epsilon_1$ while fixing $\epsilon_2 \coloneqq 0$.}\label{fig: opt val fix eps2}
\end{figure}
\begin{figure}[htbp]
	\centering
	\includegraphics[width= 0.6 \textwidth]{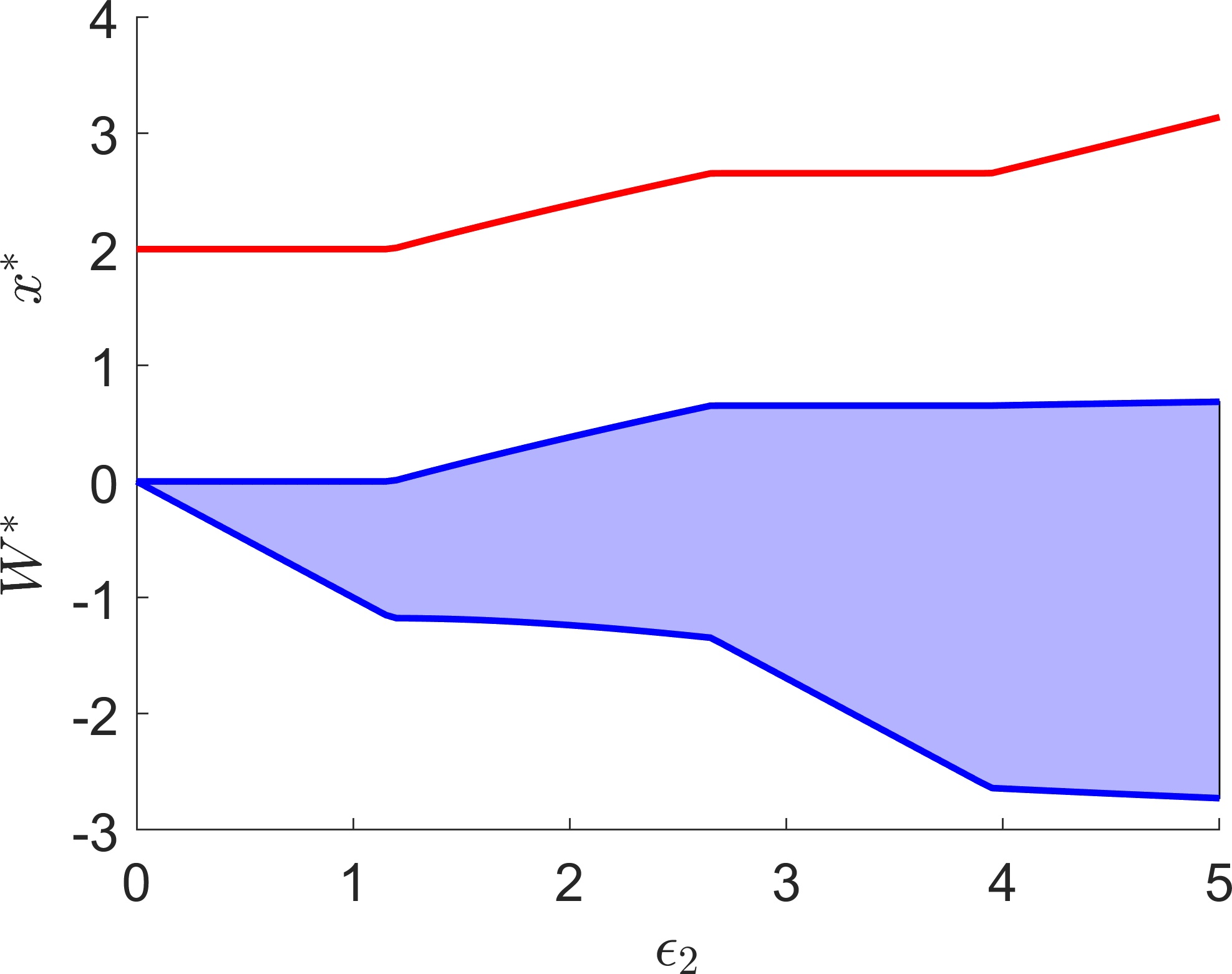}
	\caption{Optimal arguments $x^*$ as red line and $W^*$ as blue area for different for different values $\epsilon_2$ while fixing $\epsilon_1 \coloneqq 0$.}\label{fig: opt val fix eps1}
\end{figure}

\section{Conclusion}\label{sec: conclusion}
Given a parameterized optimization problem, a corresponding nominal scenario, and a budget, one can ask for a solution that is close to optimal with respect to the objective function value of the nominal optimization problem, while being feasible for as many scenarios as possible. 

In this article, we introduced an optimization problem to compute the best coverage of a given uncertainty set. In Section~\ref{sec: model} we introduced the inverse robust optimization problem (IROP) and some structural properties of its solution. In Section~\ref{sec: cover space choice} we discussed different cover spaces that satisfy the assumptions needed for the given structural results of Section~\ref{sec: model}. After comparing IROP with the stability radius, the resilience radius, and the radius of robust feasibility in Section~\ref{sec: concept comparison}, we provided examples in Section~\ref{sec: examples} that demonstrate the flexibility of the concept of inverse robustness. 

\bibliographystyle{plain}
\bibliography{references.bib}

\newpage
\appendix
\section{Properties of Example \ref{sec: bicriteria problem}}

\begin{proof}[Proof of Proposition~\ref{prop: reduced problem formulation}]
	\phantom{.} \ \\
	As discussed in Section~\ref{sec: bicriteria problem} it is enough to consider bounded intervals. Thus, we know that the problem is equivalent to
	\begin{align*}
		\max_{x \in \R, d_1,d_2 \in \R} \, & \mathbb{P}(u \in [d_1,d_2]) \\
		\text{ s.t. } \, & -x+u \leq -2 + \epsilon_1 \qquad \forall u \in [d_1,d_2]\\
		& 2x-u \leq 4 + \epsilon_2 \qquad \forall u \in [d_1,d_2]\\
		& x(u-1)+\exp(u)-1 \leq 0 \qquad \forall u \in [d_1,d_2]\\
		& 0 \in [d_1,d_2].
	\end{align*}
	This problem can be reformulated by computing the maxima within the budget and feasibility constraints
	\begin{align*}
		&\arg\max_{u \in [d_1,d_2]} -x+u = \{d_2\}\\
		&\arg\max_{u \in [d_1,d_2]} 2x-u = \{d_1\}\\
		&\arg\max_{u \in [d_1,d_2]} x(u-1)+\exp(u)-1 = \{d_2\}.
	\end{align*}
	To determine the maximal argument in the feasibility constraint we used the identity $\partial_u g(x,u) = x + \exp(u)$ and that $0 \in [d_1,d_2]$ implies that $g(x,0) = -x \leq 0$ is a necessary condition for a feasible choice of $x$. Therefore $\partial_u g(x,u) > 0$ holds for all feasible choices of $x$ and $u \in \mathbb{R}$. In a last step, we obtain the maximizer~$d_2$ by considering $g(x,u) = \int_{d_1}^u \partial_u g(x,w) dw$.
	
	The last constraint also shows that $d_2 \leq 1$. Otherwise if $d_2>1$ we would violate the feasibility constraint with $x\geq 0$ via
	\begin{equation*}
		x(d_2-1)+ \exp(d_2)-1 > 0.
	\end{equation*}
	
	This means that we receive the following equivalent problem
	
	\begin{align}
		\reducedExampleProblem \max_{x \in \R, d_1,d_2 \in \R^2} \, & \mathbb{P}(u\leq d_2) - \mathbb{P}(u\leq d_1) \\
		\text{ s.t. } \, &  -x+d_2 \leq -2 + \epsilon_1 \label{firstBudgetConstraintReducedProblem}\\
		&\, 2x-d_1 \leq 4 + \epsilon_2 \label{secondBudgetConstraintReducedExampleProblem}\\
		&\, x(d_2 - 1) + \exp(d_2) - 1\leq 0 \label{feasibilityConstraintReducedExampleProblem}\\
		&\, d_1 \leq 0\\
		&\, 0 \leq d_2 \leq 1\\
		&\, 0 \leq x.
	\end{align}
	The objective function is concave in $d_1 \in (-\infty,0]$ and $d_2\in [0,\infty)$. The nonlinear constraint is convex in $x$ and $d_2$, as $x \geq 0$ and $d_2 \in [0,1]$. Since all other constraints are linear w.r.t. $(x,d) \in \R^3$, the reduced problem is a convex optimization problem. The existence of a solution is guaranteed by Theorem \ref{thm: Existence Measure}.
\end{proof}

For the following proposition denote for a given budged $\epsilon$ the optimal solution of the reduced problem $\reducedExampleProblem$ by $x^*(\epsilon),d_1^*(\epsilon)$ and $d_2^*(\epsilon)$

\begin{proposition}[Behavior w.r.t. increasing budgets]\label{prop: increasing budgets}
	Fixing $\epsilon_0 \coloneqq (0,0)^\top$ leads to the solution $x^* = 2, d^* = (0,0)^\top$ and therefore $V(W(d^*(\epsilon_0))) = 0$. For any fixed $\epsilon_1 \geq 0$ we get:
	\begin{itemize}
		\item $\lim_{\epsilon_2 \to \infty} x^*(\epsilon) = \infty$,
		\item $\lim_{\epsilon_2 \to \infty} d^*_1(\epsilon) = -\infty$,
		\item $\lim_{\epsilon_2 \to \infty} d_2^*(\epsilon) = 1$.
	\end{itemize}
	
	For any fixed $\epsilon_2 \geq 0$ and $\epsilon_1 \geq \bar{\epsilon}_1 \coloneqq 3$ the second budget constraint and the feasibility constraint are active. Since the feasibility constraint is independent of~$\epsilon$, it will not change w.r.t. an increasing budget and therefore we obtain
	\begin{itemize}
		\item $\lim_{\epsilon_1 \to \infty} x^*(\epsilon) = x^*(\bar{\epsilon}_1,\epsilon_2)$,
		\item $\lim_{\epsilon_1 \to \infty} d^*_1(\epsilon) = d_1^*(\bar{\epsilon},\epsilon_2)$,
		\item $\lim_{\epsilon_1 \to \infty} d_2^*(\epsilon) = d_2^*(\bar{\epsilon},\epsilon_2)$.
	\end{itemize}
\end{proposition}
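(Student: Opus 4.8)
The plan is to work throughout with the reduced problem $\reducedExampleProblem$, whose objective $\mathbb{P}(u\le d_2)-\mathbb{P}(u\le d_1)$ is a continuous, strictly increasing function of $d_2$ and a strictly decreasing function of $d_1$, with $\mathbb{P}(u\le t)\to0$ as $t\to-\infty$ and $\mathbb{P}(u\le t)\to1$ as $t\to\infty$; I write $\varphi$ for the standard normal density. By Proposition~\ref{prop: properties of optimal arguments} I may assume the second budget constraint is active at any optimum, so $d_1^*=2x^*-4-\epsilon_2$, together with $d_1^*\le0$ and $d_2^*\in[0,1)$. I introduce $\bar d(x)$ as the unique root in $d$ of $x(d-1)+\exp(d)-1=0$; since this expression is strictly increasing in $d$ and equals $-x<0$ at $d=0$ and $e-1>0$ at $d=1$, the root lies in $(0,1)$, is strictly increasing in $x$, and solving $x=\frac{\exp(d)-1}{1-d}$ gives $\bar d(x)\to1$ as $x\to\infty$. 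Hence at an optimum $d_2^*=\min\{x^*-2+\epsilon_1,\ \bar d(x^*)\}$, the smaller of the first-budget and feasibility bounds. The case $\epsilon_0=(0,0)^\top$ is then immediate: $d_2\ge0$ and the first budget constraint $d_2\le x-2$ force $x\ge2$, while $d_1\le0$ and the active second budget constraint $d_1=2x-4$ force $x\le2$; thus $x^*=2$, $d^*=(0,0)^\top$, the feasibility value $2(0-1)+e^{0}-1=-2\le0$ holds, and $V(W(d^*))=\mathbb{P}(u\le0)-\mathbb{P}(u\le0)=0$.

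For fixed $\epsilon_1$ and $\epsilon_2\to\infty$ I would first note that $\mathbb{P}(u\le1)$ is a strict upper bound for the objective, since $d_2^*<1$ and $d_1^*\le0$ give $\mathbb{P}(u\le d_2^*)-\mathbb{P}(u\le d_1^*)<\mathbb{P}(u\le1)$. I then construct a feasible family attaining this value in the limit: fix $\delta\in(0,1)$, set $d_2=1-\delta$ and $x=\max\{x_\delta,\,3-\delta-\epsilon_1\}$, where $x_\delta$ is defined by $\bar d(x_\delta)=1-\delta$, so that the first-budget and feasibility constraints hold, and set $d_1=2x-4-\epsilon_2$; for $\epsilon_2$ large this is $\le0$ and tends to $-\infty$, so the objective tends to $\mathbb{P}(u\le1-\delta)$. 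Letting $\delta\to0$ shows the optimal value tends to $\mathbb{P}(u\le1)$. A squeeze then finishes it: since $\mathbb{P}(u\le d_2^*)\le\mathbb{P}(u\le1)$ and $\mathbb{P}(u\le d_1^*)\ge0$ while their difference tends to $\mathbb{P}(u\le1)$, both bounds become tight, forcing $d_2^*\to1$ and $d_1^*\to-\infty$; and $d_2^*\to1$ together with $d_2^*\le\bar d(x^*)<1$ forces $\bar d(x^*)\to1$, hence $x^*\to\infty$ by the monotonicity of $\bar d$.

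For fixed $\epsilon_2$ and $\epsilon_1\to\infty$ I would exploit that, once the feasibility bound is the active one, the objective no longer sees $\epsilon_1$. Writing $\psi(x):=\bar d(x)-x+2$, let $x^\sharp$ be a maximizer of the $\epsilon_1$-free function $F_2(x):=\mathbb{P}(u\le\bar d(x))-\mathbb{P}(u\le2x-4-\epsilon_2)$ and set $\epsilon_1^*(\epsilon_2):=\max\{0,\psi(x^\sharp)\}$. Since $G(x,\epsilon_1):=\mathbb{P}\big(u\le\min\{x-2+\epsilon_1,\bar d(x)\}\big)-\mathbb{P}(u\le2x-4-\epsilon_2)\le F_2(x)\le F_2(x^\sharp)$ for every $x$, and since for $\epsilon_1\ge\epsilon_1^*$ the first-budget bound is slack at $x^\sharp$ so that $G(x^\sharp,\epsilon_1)=F_2(x^\sharp)$, the point $x^\sharp$ is a global maximizer of $G(\cdot,\epsilon_1)$ for all $\epsilon_1\ge\epsilon_1^*$. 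Hence in this regime $x^*=x^\sharp$, $d_1^*=2x^\sharp-4-\epsilon_2$ and $d_2^*=\bar d(x^\sharp)$ are independent of $\epsilon_1$ with the feasibility and second budget constraints active, so the three limits as $\epsilon_1\to\infty$ equal precisely these threshold values. The complementary claim that for $\epsilon_1\le\epsilon_1^*$ both budget constraints are active follows from the strict monotonicity of $\psi$ on $(0,\infty)$, which holds because $\bar d'(x)=\frac{1-\bar d(x)}{x+e^{\bar d(x)}}<1$.

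The main obstacle is the regularity underpinning these regime statements. To speak of \emph{the} optimal solution one needs uniqueness of the maximizer, which does not follow from concavity --- $F_1(x):=\mathbb{P}(u\le x-2+\epsilon_1)-\mathbb{P}(u\le2x-4-\epsilon_2)$ and $F_2$ need not be concave --- and must instead be extracted from the stationarity conditions $\varphi(x-2+\epsilon_1)=2\varphi(2x-4-\epsilon_2)$ in the budget regime and $\varphi(\bar d(x))\,\bar d'(x)=2\varphi(2x-4-\epsilon_2)$ in the feasibility regime, combined with the strict monotonicity of the two CDF trade-offs. One must also treat the non-differentiability of $G$ at the kink where the $\min$ switches between the two bounds, and verify that the optimizer crosses this kink exactly once as $\epsilon_1$ grows, so that the two regimes glue continuously at $\epsilon_1^*(\epsilon_2)$; tracking the binding curve $\epsilon_1=\psi(x)$ and using that $\psi$ is a strictly decreasing bijection gives both the single threshold and the claimed activity pattern.
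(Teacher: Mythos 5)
Your handling of the first two regimes matches the paper's proof in substance: for $\epsilon=(0,0)^\top$ the paper likewise pins down $(x,d_1,d_2)=(2,0,0)$ as the only feasible point, and for $\epsilon_2\to\infty$ it also constructs an explicit feasible family (namely $\bar x=\tfrac{\epsilon_2}{4}+2$, $\bar d_1=-\tfrac{\epsilon_2}{2}$, $\bar d_2=\min\{\tfrac{\epsilon_2}{4}+\epsilon_1,\,m^{-1}(\tfrac{\epsilon_2}{4}+2)\}$ with $m(d)=\tfrac{\exp(d)-1}{1-d}$; your $\bar d(\cdot)$ is exactly $m^{-1}$) and then squeezes, so your $\delta$-parametrized family is an equivalent variant. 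In fact your derivation of $x^*(\epsilon)\to\infty$ from the feasibility bound $x^*\ge m(d_2^*)$ and $d_2^*\to1$ is cleaner than the paper's, whose displayed chain only bounds $x^*$ \emph{from above} by a divergent quantity and hence, read literally, proves nothing. The genuine divergence is in the regime $\epsilon_1\to\infty$: the paper's argument there is a two-line observation --- every feasible point satisfies $d_2<1$ and $x\ge0$, so the first budget constraint $-x+d_2\le-2+\epsilon_1$ is automatically implied once $\epsilon_1\ge3$; hence the feasible set, and with it the entire problem and its whole solution set, is literally independent of $\epsilon_1$ from then on, and the three limits follow with no further analysis. Your route via the maximizer $x^\sharp$ of the $\epsilon_1$-free objective $F_2$ and the threshold $\epsilon_1^*=\max\{0,\psi(x^\sharp)\}$ is sound as far as it goes, and it buys something the paper never delivers: a sharper threshold and an actual handle on the activity pattern asserted in the proposition (the paper's proof silently ignores that part of the statement). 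But the ``obstacles'' you end with --- uniqueness via stationarity of $\varphi(x-2+\epsilon_1)=2\varphi(2x-4-\epsilon_2)$, the kink of the $\min$, the single crossing of the binding curve --- are largely artifacts of this heavier route; for the claimed limits none of them is needed, because once the feasible set no longer depends on $\epsilon_1$, whatever selection rule defines $x^*(\epsilon),d^*(\epsilon)$ is forced to be constant in $\epsilon_1$. (The uniqueness caveat itself is real, but it afflicts the proposition's formulation and the paper's own proof in exactly the same way, so it is not a defect specific to your argument.)
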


\begin{proof}[Proof of Proposition~\ref{prop: increasing budgets}]
	\phantom{.} \ \\
	\begin{itemize}
		\item[i)]  Case $\epsilon = (0,0)^\top$.
		Given the budget~$\epsilon \coloneqq (0,0)^\top$, the reduced inverse robust example problem can be formulated as:
		\begin{align}
			\max_{x \in \R, d_1,d_2 \in \R} \, & \mathbb{P}(u\leq d_2) - \mathbb{P}(u\leq d_1) \nonumber\\
			\text{ s.t. } \, &  -x+d_2 \leq -2 \label{cons: example budget 1}\\
			& 2x-d_1 \leq 4\label{cons: example budget 2}\\
			& x(d_2 - 1) + \exp(d_2) - 1\leq 0\nonumber\\
			& d_1 \leq 0\nonumber\\
			& 0 \leq d_2\nonumber\\
			&\, 0 \leq x. \nonumber
		\end{align}
		
		Considering the budget constraints~\eqref{cons: example budget 1} and \eqref{cons: example budget 2}, we conclude
		\begin{align*}
			x \in \left[ 2+ d_2, 2 + \frac{d_1}{2} \right]
		\end{align*}
		
		Since $d_1 \leq 0, d_2 \geq 0$ has to hold, it follows directly
		\begin{align*}
			x = 2 \; \land  \; d_1 = 0 \; \land \; d_2 = 0.
		\end{align*}
		
		Since this is the only feasible point, it is also the optimal solution of the given problem.
		
		\begin{enumerate}
			\item [ii)] Case $\lim\epsilon_2 \to \infty$.
			We have seen in Section \ref{sec: bicriteria problem} that for $\epsilon_1=0$ and $\epsilon_2$ going to infinity there is a sequence of feasible points such that the objective value converges towards $\mathbb{P}(u\leq 1)$. This means that for the optimal objective value we have
			\begin{equation*}
				\lim_{\epsilon_2 \rightarrow \infty} \mathbb{P}(u\in [d_1^*(\epsilon),d_2^*(\epsilon)]) = \mathbb{P}(u\in (-\infty,1]).
			\end{equation*}
			This is only possible if  
			\begin{align*}
				& \lim_{\epsilon_2 \to \infty} d^*_1(\epsilon) = -\infty\\
				& \lim_{\epsilon_2 \to \infty} d^*_2(\epsilon) = 1.
			\end{align*}
			Considering the feasibility constraint we receive
			\begin{equation*}
				x^*(\epsilon) \geq \frac{\exp(d_2^*(\epsilon))-1}{1-d_2^*(\epsilon)}.
			\end{equation*}
			This shows that we have $\lim_{\epsilon_2 \to \infty} x^*(\epsilon)=\infty$.
			
			\item[iii)] Case $\lim \epsilon_1 \to \infty$.
			
			Let us fix an arbitrary $\epsilon_2 \geq 0$. If we analyze the reduced inverse robust example problem again, we can rewrite its first budget constraint as
			\begin{align*}
				d_2 \leq -2+\epsilon_1+x.
			\end{align*}
			
			As we know that the variable $d_2$ is bounded above by $1$ and we already mentioned that a feasible $x$ has to satisfy $x \geq 0$. Consequently the first budget constraint is fulfilled for all $\epsilon_1 \geq 3$.\\
			Because $\epsilon_1$ just occurs in the first budget constraint of the reduced inverse robust example problem, we know that for $\epsilon_1 \geq 3$ the solution of the problem instance just depends on the choice of $\epsilon_2 \geq 0$ what proves the claim.
		\end{enumerate}
	\end{itemize}
\end{proof}

\end{document}